\theoremstyle{definition}
\newtheorem{theorem}{Theorem}
\newtheorem{remark}[theorem]{Remark}
\begin{document}
\title{%
On the Spectral Equivalence of Koopman Operators\\
through Delay Embedding
}%
\author{%
Yoshihiko Susuki\footnote{Yoshihiko Susuki is with Department of Electrical and Information Systems, Osaka Prefecture University, Japan, \texttt{susuki@eis.osakafu-u.ac.jp}, \texttt{susuki@ieee.org}}~~~ 
Kyoichi Sako~~~
Takashi Hikihara\footnote{Kyoichi Sako and Takashi Hikihara are with Department of Electrical Engineering, Kyoto University, Japan.}
}%
\date{June 3, 2017}

\maketitle

\begin{abstract}
We provide one theorem of spectral equivalence of Koopman operators of an original dynamical system and its reconstructed one through the delay-embedding technique. 
The theorem is proved for measure-preserving maps (e.g. dynamics on compact attractors) and provides a mathematical foundation of computing spectral properties of the Koopman operators by a combination of extended dynamic mode decomposition and delay-embedding.
\end{abstract}

\section{Introduction}

\par
Applied Koopman operator theory is a developing research area in dynamical systems during past two decades \cite{Marko_CHAOS22}.  
Koopman operator itself, which is a linear, infinite-dimensional operator defined for analysis of nonlinear Hamiltonian systems \cite{Koopman_PNAS17}, is a well-known mathematical object in dynamical systems and ergodic theory: see e.g. \cite{Arnold:1968,Peterson:1983}. 
Recent efforts and progress are made for general (dissipative, non-autonomous, non-stationary, and hybrid)  dynamics with a strong connection to data-driven science and technologies: fluids \cite{Rowley_JFM641,Mezic_ARFM45}, power grids \cite{Susuki_NOLTAIEICE7}, neural activity \cite{Brunton_JNM258}, molecular kinetics \cite{Wu_JCP146}, and so on.  

Extended Dynamic Mode Decomposition (EDMD) is a widely recognized technique for approximating the Koopman operator directly from time-series data \cite{Matt_JNLS25}.  
In this technique, we define a finite set of basis functions on state space of a target dynamical system and apply a technique of DMD \cite{Schmid_JFM656,Tu_JCD1} to the data set encoded via the basis functions from original data on \emph{state dynamics}, that is, a bundle of state trajectories.  
Thereby, it is possible to approximately obtain eigenvalues and eigenfunctions of the Koopman operator as a super-position of the basis functions.  

One question arises when we solve practical problems with EDMD.    
Real-world dynamics are accessible only from a few measurements (observations) and are hard to obtain their mathematical model in state space.  
To overcome this, the so-called delay-embedding \cite{Takens:1981} is a powerful technique to extract information of state dynamics from scalar time-series.     
Indeed, Mezi\'c and Banaszuk \cite{Mezic_PD197} focus on the delay-embedding and propose its extended version---statistical Takens theorem---for constructing ergodic partition of state space directly from scalar time-series of a single observable, which enables us to visualize level sets of eigenfunctions of the Koopman operator.  
Das and Giannakis \cite{Das_SIAMDS17} study the spectra of Koopman operators (semi-groups) with an asymptotic approximation based on integral operators and delay-embedding.  
Rowley and Steyert \cite{Rowley_SIAMDS17} study a fundamental question of how continuous spectra of the Koopman operators for measure-preserving systems are captured with EDMD, where the delay-embedding is addressed.   

As we will indicate below, the Koopman operator naturally appears in the construction of delay-embedding.  
A fundamental question posed here is the spectral equivalence of two Koopman operators for the original and reconstructed (possibly higher-dimensional) dynamical systems. 
In this note, we provide one theorem of the spectral equivalence for measure-preserving maps.  
This provides a mathematical foundation that applying EDMD to reconstructed data does work for estimating the spectrum of the Koopman operator of the original system.  
This result was firstly announced in the International Symposium on Nonlinear Theory and its Applications (NOLTA) \cite{Susuki_NOLTA2016}.

\section{Model System and Definitions}

In this note, we will consider a smooth diffeomorphism (map, dynamical system) $T$ defined on a probability space $(M,\mathfrak{B}_M,\mu)$, where $M$ is a compact metric space, $\mathfrak{B}_M$ is the Borel $\sigma$-algebra of $M$, and $\mu$ a probability measure.  
The diffeomorphism $T: M\to M$ is called \emph{measure-preserving} with $\mu$ if $\mu\circ T^{-1}=\mu$ holds.  
Also, we denote by $\mathcal{F}_M$ a space of observables which correspond to scalar-valued functions defined on $M$, and we denote by $L_2(M)$ the space of functions for which the 2nd power of absolute value is integrable on $M$.  

The \emph{Koopman operator} $\mathbf{U}_T: \mathcal{F}_M\to\mathcal{F}_M$ for the map $T$ is defined as a composition operator with $T$: for $f\in\mathcal{F}_M$, 
\[
\mathbf{U}_Tf = f\circ T.
\] 
The Koopman operator $\mathbf{U}_T$ is linear and infinite-dimensional even if the target dynamical system evolves on a finite-dimensional space.  
For $T$ measure-preserving, $\mathbf{U}_T$ becomes an unitary operator \cite{Arnold:1968,Peterson:1983}.

\section{Delay-Embedding and Spectral Equivalence of Koopman operators}

First of all, we provide the celebrated Takens embedding theorem \cite{Takens:1981} in which we use the notion of Koopman operator $\mathbf{U}_T$.
\begin{theorem}
\label{thm:Takens}
(Takens 1981) Let $M$ be a compact manifold of dimension $m$.  
For pairs $(T,f)$, ${T}: M\to M$ a smooth diffeomorphism and $f: M\to\mathbb{R}$ a 
smooth observable, it is a generic property that the map $\mathit{\Phi}_{(T,f)}: M\to\mathbb{R}^{2m+1}$, 
defined by
\begin{equation}
\mathit{\Phi}_{(T,f)}({x}) := (f({x}), (\mathbf{U}_Tf)(x),\ldots,(\mathbf{U}_T^{2m}f)(x))
\label{eqn:embedding}
\end{equation}
is an embedding; by ``smooth" we mean at least $C^2$.  
\end{theorem}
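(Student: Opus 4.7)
The plan is to follow Takens' original transversality argument. Equip the space of pairs $(T,f) \in \mathrm{Diff}^2(M) \times C^2(M,\mathbb{R})$ with the strong $C^2$ topology, and aim to show that the set of $(T,f)$ for which $\mathit{\Phi}_{(T,f)}$ is an embedding is residual (a countable intersection of open dense sets). Since $M$ is compact, it suffices to verify that $\mathit{\Phi}_{(T,f)}$ is a one-to-one immersion; the embedding property then follows automatically. Thus two genericity statements must be established: (i) $d\mathit{\Phi}_{(T,f)}$ has rank $m$ at every $x \in M$, and (ii) $\mathit{\Phi}_{(T,f)}(x) \neq \mathit{\Phi}_{(T,f)}(y)$ whenever $x \neq y$. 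The target dimension $2m+1$ is precisely what the transversality dimension count demands, in the spirit of Whitney's embedding theorem.

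First I would dispose of the ``generic'' points of $M$ by classical jet transversality. Away from periodic orbits of period $\leq 2m$, the $2m+1$ iterates $x, Tx, \ldots, T^{2m}x$ are pairwise distinct, so one can use bumps of $f$ supported in disjoint neighborhoods of these iterates to perturb the values $f(T^i x)$ and the row covectors $d(f \circ T^i)_x = (df)_{T^i x} \circ DT^i_x$ independently. Applying Thom's transversality theorem to the map $(x,y) \mapsto \mathit{\Phi}_{(T,f)}(x) - \mathit{\Phi}_{(T,f)}(y)$ on the complement of a fixed neighborhood of the ``low-period stratum plus diagonal'' yields open dense sets of $(T,f)$, whose countable intersection (over an exhaustion of $M \times M \setminus \Delta$) handles injectivity on this stratum; a parallel argument on $1$-jets handles the immersion condition there.

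The main obstacle is the behavior at periodic orbits of period $p \leq 2m$, where one cannot perturb $f$ independently along the orbit because the $2m+1$ observations are just $p$ real numbers repeated cyclically. Here I would exploit genericity of $T$ itself: for a residual set of $T$, the period-$p$ points are finite in number and at each such point the derivative $A := DT^p_x$ has simple, non-resonant eigenvalues. Writing the rows of $d\mathit{\Phi}_x$ as $L_i \circ A^j$, where $L_i := (df)_{T^i x} \circ DT^i_x$ for $i = 0,\ldots,p-1$ and $j = 0,1,\ldots$, one shows via a Vandermonde-type argument in the eigenbasis of $A$ that, for generic $df$ at the orbit points, these covectors span $T_x^{\ast} M$ once $j$ ranges up to roughly $m$ — which is the content of the bound $p \leq 2m$. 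A similar Vandermonde argument separates distinct points on the same periodic orbit. Finally, I would assemble the finitely many periodic-orbit conditions with the countable family of conditions on the non-periodic stratum into a single residual set via the Baire category theorem, concluding that the set of $(T,f)$ giving an embedding is residual in $\mathrm{Diff}^2(M) \times C^2(M,\mathbb{R})$.
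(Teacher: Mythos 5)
The paper does not prove this statement: Theorem~\ref{thm:Takens} is quoted verbatim from Takens' 1981 paper as background, and the authors' own contribution (Theorem~\ref{thm:main}) merely \emph{uses} the resulting homeomorphism $\varphi$. So there is no in-paper proof to compare against; your proposal has to be judged against Takens' original argument, which is indeed the one you are reconstructing.

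As an outline of that argument your proposal is essentially sound: reduce ``embedding'' to ``injective immersion'' by compactness, split $M$ into the low-period stratum and its complement, use independent bump perturbations of $f$ along the $2m+1$ distinct iterates plus parametric transversality off the diagonal (where the codimension count $2m<2m+1$ does the work), and handle periodic points of period $p\le 2m$ by first putting $T$ in general position (finitely many such points, $DT^p$ with simple eigenvalues) and then running a Vandermonde argument on the covectors $L_r\circ A^q$. That is exactly the architecture of Takens' proof. Three places where your sketch hides real work: (i) the genericity of $T$ you invoke (finiteness of low-period points and simplicity of eigenvalues) is a Kupka--Smale-type statement that needs its own transversality argument and is a perturbation of $T$, not of $f$, so you must check it does not disturb the conditions already arranged; (ii) for two distinct non-periodic points on the \emph{same} orbit, say $y=T^kx$ with $k\le 2m$, the neighborhoods of the iterates of $x$ and of $y$ are not disjoint, so the values $f(T^ix)$ and $f(T^jy)$ cannot be perturbed independently --- you must argue directly that the differences $f(T^ix)-f(T^{i+k}x)$ can be made simultaneously nonzero; (iii) the attribution of the threshold $p\le 2m$ to the Vandermonde step is slightly off --- that threshold is simply the largest period for which the list $x,Tx,\dots,T^{2m}x$ contains repetitions, while the Vandermonde lemma is what guarantees that $m$ of the covectors $L_rA^q$ with $qp+r\le 2m$ already span $T_x^*M$ for generic $df$ on the orbit. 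None of these is a fatal gap, but each needs to be written out before the sketch becomes a proof.
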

\begin{remark}
The domain of $\mathit{\Phi}_{(T,f)}$ is $M$, and its image can be a subset of the codomain $\mathbb{R}^{2m+1}$.   
From Theorem~\ref{thm:Takens}, there exists a homeomorphism $\varphi: M\to\mathit{\Phi}_{(T,f)}(M)\subseteq\mathbb{R}^{2m+1}$.  
Thus, the  \emph{reconstructed map} $\tilde{T}: \varphi(M)\to\varphi(M)$ is defined with the following commutative relation:
\begin{equation}
\varphi\circ{T}=\tilde{T}\circ\varphi.
\label{eqn:comm-T}
\end{equation}
The commutative diagram is shown in Figure~\ref{fig:comm-T}.  
The map $\tilde{T}$ is the mathematical object constructed via the delay-embedding (\ref{eqn:embedding}) directly from scalar time-series data, to which we can apply the EDMD.  
\end{remark}

\begin{figure}[t]
\centering
\Large
\[
\begin{CD}
M @> T >> M\\
@V{\varphi}VV @VV{\varphi}V\\
\mathit{\Phi}_{(T,f)}(M) @>{\tilde{T}} >>\mathit{\Phi}_{(T,f)}(M)
\end{CD}
\]
\caption{Commutative diagram for maps}
\label{fig:comm-T}
\end{figure}

\begin{figure}[t]
\centering
\Large
\[
\begin{CD}
\mathcal{F}_M @> \mathbf{U}_{T} >> \mathcal{F}_M\\
@A{\mathbf{C}_\varphi}AA @AA{\mathbf{C}_\varphi}A\\
\mathcal{F}_{\varphi(M)} @>{\mathbf{U}_{\tilde{T}}} >>\mathcal{F}_{\varphi(M)}
\end{CD}
\]
\caption{Commutative diagram for Koopman operators}
\label{fig:comm-U}
\end{figure}

We now state the theorem on spectral equivalence of Koopman operators through the delay-embedding.  
For this, we denote by $\mathbf{U}_{\tilde{T}}$ the Koopman operator defined for the reconstructed map $\tilde{T}$ and by $\mathcal{F}_{\varphi(M)}$ the space of observables defined on $\varphi(M)$ (it becomes compact) on which $\mathbf{U}_{\tilde{T}}$ acts. 
\begin{theorem}
\label{thm:main}
Suppose that ${T}: M\to M$ is measure-preserving. 
For a smooth observable $f\in\mathcal{F}_M=L_2(M)$, consider the reconstructed map $\tilde{T}: \varphi(M)\to\varphi(M)$ through the delay-embedding $\mathit{\Phi}_{(T,f)}$ in (\ref{eqn:embedding}).    
Then, the associated two Koopman operators $\mathbf{U}_T: \mathcal{F}_M\to\mathcal{F}_M$ and $\mathbf{U}_{\tilde{T}}: \mathcal{F}_{\varphi(M)}\to\mathcal{F}_{\varphi(M)}$ are spectrally equivalent.  
\end{theorem}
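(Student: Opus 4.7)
The plan is to exploit the commutative diagram $\varphi \circ T = \tilde{T}\circ\varphi$ by turning the homeomorphism $\varphi: M\to\varphi(M)$ into a unitary isomorphism between the appropriate $L_2$-spaces, thereby conjugating one Koopman operator into the other and making spectral equivalence immediate. Concretely, I would introduce the pushforward measure $\tilde\mu := \varphi_\ast\mu$ on $\varphi(M)$ and the composition operator $\mathbf{C}_\varphi: \mathcal{F}_{\varphi(M)}\to\mathcal{F}_M$, $\mathbf{C}_\varphi g := g\circ\varphi$, matching the diagram in Figure~\ref{fig:comm-U}.

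The first step is to verify that $\tilde T$ is measure-preserving with respect to $\tilde\mu$. This is a direct computation: for any Borel $B\subseteq\varphi(M)$,
\[
\tilde\mu\bigl(\tilde T^{-1}(B)\bigr) = \mu\bigl(\varphi^{-1}\tilde T^{-1}(B)\bigr) = \mu\bigl((\tilde T\circ\varphi)^{-1}(B)\bigr) = \mu\bigl((\varphi\circ T)^{-1}(B)\bigr) = \mu\bigl(\varphi^{-1}(B)\bigr) = \tilde\mu(B),
\]
using (\ref{eqn:comm-T}) and the fact that $T$ preserves $\mu$. Consequently $\mathbf{U}_{\tilde T}$ acts unitarily on $L_2(\varphi(M),\tilde\mu)$, which I take as the natural candidate for $\mathcal{F}_{\varphi(M)}$.

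The second step is to show that $\mathbf{C}_\varphi: L_2(\varphi(M),\tilde\mu)\to L_2(M,\mu)$ is a unitary isomorphism. Isometry follows from the change-of-variables formula for pushforward measures, $\int_M |g\circ\varphi|^2\,d\mu = \int_{\varphi(M)} |g|^2\,d\tilde\mu$, and surjectivity follows because $\varphi$ is a homeomorphism, so every $f\in L_2(M,\mu)$ is realized as $\mathbf{C}_\varphi(f\circ\varphi^{-1})$ with $f\circ\varphi^{-1}\in L_2(\varphi(M),\tilde\mu)$ by the same change of variables.

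The third step is to derive the intertwining identity $\mathbf{U}_T\mathbf{C}_\varphi = \mathbf{C}_\varphi \mathbf{U}_{\tilde T}$: for $g\in\mathcal{F}_{\varphi(M)}$,
\[
\mathbf{U}_T\mathbf{C}_\varphi g = (g\circ\varphi)\circ T = g\circ(\varphi\circ T) = g\circ(\tilde T\circ\varphi) = (\mathbf{U}_{\tilde T} g)\circ\varphi = \mathbf{C}_\varphi \mathbf{U}_{\tilde T} g.
\]
Combined with the unitarity of $\mathbf{C}_\varphi$, this yields $\mathbf{U}_{\tilde T} = \mathbf{C}_\varphi^{-1}\mathbf{U}_T\mathbf{C}_\varphi$, i.e.\ the two Koopman operators are unitarily equivalent. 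Spectral equivalence (coincidence of point, continuous, and residual spectra, with unitary conjugacy of the corresponding spectral projections/measures) is then a standard consequence of unitary equivalence.

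I do not anticipate a deep obstacle; the proof is essentially a bookkeeping exercise built on the Takens embedding. The only point requiring care is the choice of function space on $\varphi(M)$: one must place $\tilde\mu$ (rather than, say, Lebesgue measure on $\mathbb{R}^{2m+1}$) on $\varphi(M)$ for $\mathbf{C}_\varphi$ to be an isometry, and verify that the Borel structure on $\varphi(M)$ is compatible with $\varphi$ (which follows from $\varphi$ being a homeomorphism onto a compact subset of $\mathbb{R}^{2m+1}$). Once that setup is in place, the commutativity provided by (\ref{eqn:comm-T}) does all the work.
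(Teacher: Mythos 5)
Your proposal is correct and follows essentially the same route as the paper's own proof: introduce the composition operator $\mathbf{C}_\varphi g = g\circ\varphi$, show that the pushforward measure $\mu\circ\varphi^{-1}$ is preserved by $\tilde{T}$ so that $\mathbf{C}_\varphi$ is a unitary between $L_2(\varphi(M))$ and $L_2(M)$, and combine this with the intertwining identity $\mathbf{C}_\varphi\mathbf{U}_{\tilde{T}}=\mathbf{U}_T\mathbf{C}_\varphi$ coming from (\ref{eqn:comm-T}). The only cosmetic difference is that you verify surjectivity of $\mathbf{C}_\varphi$ by exhibiting the explicit preimage $f\circ\varphi^{-1}$, whereas the paper cites a lemma from Eisner et al.
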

\begin{proof}
First of all, we define the composition operator with the homeomorphism $\varphi$, denoted by $\mathbf{C}_{\varphi}: \mathcal{F}_{\varphi(M)}\to\mathcal{F}_M$: for $g\in\mathcal{F}_{\varphi(M)}$,
\begin{equation}
\mathbf{C}_{\varphi}g=g\circ\varphi.
\end{equation}
For $g\in\mathcal{F}_{\varphi(M)}$, we have
\begin{eqnarray}
\mathbf{U}_{\tilde{T}}g &=& g\circ\tilde{T} \nonumber\\
&=& g\circ\left(\varphi\circ{T}\circ\varphi^{-1}\right) \nonumber\\
(\mathbf{U}_{\tilde{T}}g)\circ\varphi 
&=& \left(g\circ\varphi\right)\circ{T} \nonumber\\
\mathbf{C}_{\varphi}\left(\mathbf{U}_{\tilde{T}}g\right)
&=&  \left(\mathbf{C}_{\varphi}g\right)\circ{T} \nonumber\\
&=& \mathbf{U}_T\left(\mathbf{C}_{\varphi}g\right).
\end{eqnarray}
Because $g$ is arbitrary, the following commutative relation of the two Koopman operators $\mathbf{U}_T$ and $\mathbf{U}_{\tilde{T}}$ is derived:
\begin{equation}
\mathbf{C}_{\varphi}\circ\mathbf{U}_{\tilde{T}}
= \mathbf{U}_T\circ\mathbf{C}_{\varphi}.
\label{eqn:comm-U}
\end{equation}
The diagram for the Koopman operators is shown in Figure~\ref{fig:comm-U}.  
Note that the direction of vertical arrows used  in Figures~\ref{fig:comm-T} and \ref{fig:comm-U} are opposite for maps (state dynamics) and Koopman operators (observable dynamics).  

Thus, the proof is ended by showing that the composition operator $\mathbf{C}_{\varphi}$ is an unitary operator for the measure-preserving ${T}$.  
Suppose that the measure $\mu$ is preserved under ${T}$, that is, 
\begin{equation}
\mu\circ{T}^{-1}=\mu.
\end{equation}
Then, we see that the new measure $\mu\circ\varphi^{-1}$ is preserved under the reconstructed map $\tilde{T}$ as shown below:  
\begin{eqnarray}
\mu\circ{T}^{-1} &=& \mu \nonumber\\
\mu\circ\left(\varphi^{-1}\circ\tilde{T}^{-1}\circ\varphi\right) &=& \mu \nonumber\\
\left(\mu\circ\varphi^{-1}\right)\circ\tilde{T}^{-1}
&=& \mu\circ\varphi^{-1}.
\end{eqnarray}
With this, we estimate the inner product of observables $g_1, g_2\in\mathcal{F}_{\varphi(M)}=L_2(\varphi(M))$ on the set $\varphi(M)$ as follows:
\begin{eqnarray}
(g_1,g_2)_{L_2(\varphi(M))} &=& \int_{\varphi(M)}
\overline{g_1(y)}g_2(y)d(\mu\circ\varphi^{-1})(y)  \nonumber\\
&=& \int_{M}
\overline{g_1(\varphi(x))}g_2(\varphi(x))d(\mu\circ\varphi^{-1})(\varphi(x)) \nonumber\\
&=& \int_{M}
\overline{(\mathbf{C}_\varphi g_1)(x)}(\mathbf{C}_\varphi g_2)(x)d\mu(x) \nonumber\\
&=& (\mathbf{C}_\varphi g_1,\mathbf{C}_\varphi g_2)_{L_2(M)}
\end{eqnarray} 
That is to say, $\mathbf{C}_\varphi$ preserves the inner-product in $L_2(\varphi(M))$ and $L_2(M)$. 
Also, by definition, $\mathbf{C}_\varphi$ is linear and bounded.  
For $\varphi$ homeomorphism, $\mathbf{C}_\varphi$ is surjective (from Lemma~4.14 in \cite{Eisner:2015}).  
Hence, $\mathbf{C}_{\varphi}$ is an unitary operator.    
From (\ref{eqn:comm-U}) of the Koopman operators, the proof that $\mathbf{U}_T$ and $\mathbf{U}_{\tilde{T}}$ are spectrally equivalent is ended.
\end{proof}

\section{Conclusion}

In this note, we provided the theorem of spectral equivalence of two Koopman operators connected through the delay-embedding.  
The proof itself is simple and might have been reported in literature.  
If the original dynamical system is measure-preserving, then the Takens embedding theorem implies spectral equivalence of the two Koopman operators.  
In this note, we focused on the measure-preserving maps, and needless to say, the result is applicable to dynamics on compact attractors, where it would be useful for nonlinear time-series analysis such as attractor reconstruction.  
This result guarantees that by applying the EDMD to data of reconstructed state dynamics with the delay-embedding, the spectrum of the original Koopman operator can be estimated.  
Associated eigenfunctions and eigendistributions of the Koopman operator are also estimated in the reconstructed state space. 
They are transformed into the original space via the composition operator $\mathbf{C}_\varphi$.  
The $\mathbf{C}_\varphi$ and homeomorphism $\varphi$ are normally unknown for solving real-world dynamics, and thus a new idea is required, e.g. using a prior information on state dynamics.

\section*{Acknowledgements}

The first author (YS) appreciates Professor Igor Mezi\'c (University of California, Santa Barbara) for discussions and comments on this note.  
This work is supported in part by JST CREST \#JPMJCR15K3 and MEXT KAKEN (B) \#15H03964.


\end{document}